\newtheorem{theo}{Theorem}[section]
\newtheorem{lemma}[theo]{Lemma}
\newtheorem{prop}[theo]{Proposition}
\newcommand{\io}{\int_0^1}
\newcommand{\R}{\mathbb{R}}
\newcommand{\dx}{\partial_x}
\newcommand{\dt}{\partial_t}
\newcommand{\abs}{\\[5pt]}
\newcommand{\tm}{T_{\mathrm{max}}}
\begin{document}
\title{No critical nonlinear diffusion in 1D quasilinear fully parabolic chemotaxis system}
\author{
Tomasz Cie\'slak\footnote{cieslak@impan.pl}\\
{\small Institute of Mathematics,}\\
{\small Polish Academy of Sciences,}\\
{\small Warsaw, 00-656, Poland}
\and
Kentarou Fujie\footnote{fujie@rs.tus.ac.jp}\\
{\small Department of Mathematics,}\\
{\small Tokyo University of Science,}\\
{\small Tokyo, 162-0861, Japan}
}
\date{\small\today}
\maketitle
\begin{abstract}
This paper deals with the fully parabolic 1d chemotaxis system with diffusion $1/(1+u)$. 
We prove that the above mentioned nonlinearity, despite being a natural candidate, is not critical. 
It means that for such a diffusion any initial condition, independently on the magnitude of mass, generates global-in-time solution. In view of our theorem one sees that one-dimensional Keller-Segel system is essentially 
different than its higher-dimensional versions.  In order to prove our theorem  we establish a new Lyapunov-like functional associated to the system. The information we gain from our new functional (together with some estimates based on the well-known old Lyapunov functional) turns out to be rich enough to establish global existence for the initial-boundary value problem.
\abs
 {\bf Key words:} chemotaxis; global existence; Lyapunov functional \\
 {\bf AMS Classification:} 35B45, 35K45, 35Q92, 92C17.
\end{abstract}
\newpage
\section{Introduction}\label{section_introduction}
%
%
%

We consider the one-dimensional version of the following quasilinear Keller-Segel problem 
\begin{align}\label{P1}
	\begin{cases}
	\dt u =\dx \left( a(u) \dx u -  u \dx v \right)
	&\mathrm{in}\ (0,T)\times(0,1), \\[1mm]
	 \dt v =\dx^2 v -v+u
	&\mathrm{in}\ (0,T)\times(0,1),  \\[1mm]
	 \dx A(u)(t,0)=\dx A(u)(t,1)=\dx v(t,0)=\dx v(t,1)=0
	&t\in(0,T),  \\[1mm]
	u(0,\cdot)=u_0,\quad v(0,\cdot)=v_0,
	&\mathrm{in}\ (0,1),
	\end{cases}
\end{align} 
where $a$ is a positive function $a\in C^1(0,\infty)\cap C[0,\infty)$. 
The function $A$ is an indefinite integral of $a$ and 
$u_0 \in C^1[0,1]$ such that 
$u_0 \geq 0$ in $(0,1)$.
Furthermore we assume $0 \leq v_0 \in C^1[0,1]$.

The particular choice of nonlinear diffusion $a(u)=1/(1+u)$ is important since
such a diffusion is a candidate for a critical one in one-dimensional setting.
Namely in dimensions $n\geq 2$ $a(u)=(1+u)^{1-\frac{2}{n}}$ is critical in the 
sense that it distinguishes between the global-in-time existence for any initial 
data for stronger diffusions (see \cite{ss}) and finite-time blowups when the 
diffusion is weaker, see \cite{CS}.
Next, in the particular case of diffusion given by $a(u)=(1+u)^{1-\frac{2}{n}}$,
solutions exist for small mass data while they blow up in finite time for initial 
masses large enough, see \cite{miz_win} in dimension $2$ and \cite{lau_miz} in dimensions
$3$ and $4$. An interested reader might find more details in \cite{BBTW}. 

In dimension $1$ a situation is similar, when diffusion is subcritical, namely 
$a(u)=(1+u)^{-p}, p<1$ solutions emanating from any data (regardless the magnitude
of mass) exist globally in time and are bounded, see \cite{BCM-R}, while in the supercritical 
case ($p>1$) solutions blowing up in finite time (only for big masses and under some additional 
restrictions) have been constructed in \cite{TCPhLAIHP}. In the critical case 
$a(u)=1/(1+u)$ so far only existence of global solutions for initial masses small enough 
are known (\cite{BCM-R}). Our aim is to study this case fully. Before we introduce our main result 
let us mention that similar results are known also in the parabolic-elliptic version 
of quasilinear Keller-Segel system. The  higher dimensional problem in bounded domain is 
treated in \cite{CW, Nasri}
(global existence in the subcritical and critical cases for small masses, respectively) and 
\cite{CW} (finite-time blowups in supercritical case for any initial mass), \cite{CL_CRAS}
(in the critical case for mass large enough). The same problem in the whole space
has been solved in \cite{BCL, S_DIE, S_ADE}. One-dimensional case has been solved in
\cite{CL_DCDS}, where a peculiar change of variables was used by the authors. As a consequence
both blowup in the supercritical case and global existence in the subcritical one have been obtained.
Surprisingly, also global existence in the critical case $a(u)=1/(1+u)$ was obtained. However, all
the reasoning depends on the crucial change of variables. The change of variables works only 
in the parabolic-elliptic case (moreover, it is also very sensitive to the fact that the J\"{a}ger-Luckhaus
type simplification is studied, the usual Keller-Segel type system was carried in the recent 
note \cite{CF1}). The fully parabolic case was an open problem for several years. We answer this
case in the present paper. Let us notice that also in the case of nonlocal diffusions in 1d,
at least in the parabolic-elliptic case, critical diffusion does not exist, see \cite{bg}.       

To this end, we construct the following new functional 
associated to \eqref{P1} (it is worth noticing that it holds only in dimension $1$) 
satisfying
\begin{eqnarray*}
\dfrac{d}{dt} \mathcal{F}(u(t)) + \mathcal{D}(u(t),v(t))
=\io \dfrac{ua(u)(v+\dt v)^2}{4},
\end{eqnarray*}
where
\begin{eqnarray}\label{eF}
\mathcal{F}(u(t)) 
&:=&\frac{1}{2} \io \dfrac{(a(u))^2}{u}|\dx u|^2
- \io u\int_1^u a(r)\,dr ,\\[2mm]
\mathcal{D}(u(t),v(t)) 
&:=& \io ua(u) 
\left|  \dx \left(\dfrac{a(u)}{u}\dx u \right) - \dx^2 v  + \dfrac{(v+\dt v)}{2} \right|^2.
\label{De}
\end{eqnarray}
Thanks to the known facts concerning the usual Lyapunov functional related to \eqref{P1} 
(that will be introduced later as $L$)
we notice that the growth of ${\cal F}$ along the trajectories can be controlled. Then we obtain the following main result 
answering the open question concerning global existence in the critical quasilinear fully parabolic 
1d Keller-Segel.  
\begin{theo}\label{main_theorem}
Let  $a(u)=\frac{1}{1+u}$ and both $u_0, v_0\geq 0$. 
Then the problem \eqref{P1} has a unique classical positive solution, which exists globally in time. 
\end{theo}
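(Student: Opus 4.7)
The plan is to combine the functional identity stated in the excerpt with a priori information from the classical Keller--Segel Lyapunov functional, extract an $L^\infty$-bound on $u$, and conclude global existence via the standard extensibility criterion. First I would invoke local existence of a unique classical positive solution on a maximal interval $[0,\tm)$ by standard parabolic theory, recalling the well-known criterion that $\tm<\infty$ forces $\|u(\cdot,t)\|_{L^\infty(0,1)}\to\infty$. It therefore suffices to prove that $\|u(\cdot,t)\|_\infty$ stays bounded on every finite time interval.

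Second, I would exploit the classical Lyapunov functional
\begin{equation*}
L(u,v)=\io F(u)-\io uv+\tfrac12\io v^2+\tfrac12\io |\dx v|^2,\qquad F''(s)=a(s)/s,
\end{equation*}
which is nonincreasing along \eqref{P1} with dissipation rate controlling $\io(\dt v)^2$. In one spatial dimension the Sobolev embedding $H^1(0,1)\hookrightarrow L^\infty(0,1)$ lets the cross term $\io uv$ be absorbed via Young's inequality and mass conservation, yielding a uniform-in-time bound on $\|v(\cdot,t)\|_{H^1}$ together with the time-integrated bound $\int_0^T\io(\dt v)^2\,dx\,dt \le C(T)$.

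Third, I would exploit the identity stated in the excerpt in a decisively one-dimensional way. The key observation is that for the critical choice $a(u)=1/(1+u)$ the prefactor $ua(u)=u/(1+u)\le 1$ is globally bounded --- a property that fails in higher dimensions and is the structural reason the 1D case is special. Using the $v$-equation to write $v+\dt v=\dx^2 v+u$ and combining the $L$-estimates just obtained, one controls $\int_0^T\io ua(u)(v+\dt v)^2/4\,dx\,dt \le C(T)$. Integrating the excerpt's identity on $[0,t]$ and discarding the nonnegative dissipation $\mathcal{D}$ yields the a priori upper bound
\begin{equation*}
\mathcal{F}(u(t))\le \mathcal{F}(u_0)+C(T)\qquad\text{for } t\in[0,T]\cap[0,\tm).
\end{equation*}

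The main obstacle will be converting this into an $L^\infty$-bound on $u$. With $a(u)=1/(1+u)$ one computes $\mathcal{F}(u)=\tfrac12\io|\dx f(u)|^2-\io u\ln((1+u)/2)$, where $f(u)=2\arctan\sqrt{u}$ takes values in $[0,\pi)$, so the bound on $\mathcal{F}$ must be played off against the a priori uncontrolled positive entropy $\io u\ln(1+u)$. Here I would use the time-integrated dissipation $\int_0^T\mathcal{D}\,dt$, whose integrand contains a genuine second-order expression in $u$, together with the uniform $\|v\|_{H^1}$ bound and mass conservation, to extract an a priori bound on $\io u\ln(1+u)$. Granting this, $\mathcal{F}$ bounded translates into $\io|\dx f(u)|^2$ bounded, and since $f$ maps into $[0,\pi)$ the one-dimensional embedding $H^1\hookrightarrow C^{0,1/2}$ combined with the $u\ln(1+u)$-bound forces $\|f(u(\cdot,t))\|_\infty$ to remain strictly below $\pi$ with a uniform gap --- equivalently, $\|u(\cdot,t)\|_\infty\le C(T)$. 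The extensibility criterion of the first step then rules out finite-time blow-up and Theorem~\ref{main_theorem} follows.
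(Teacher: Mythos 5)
Your proposal tracks the paper closely up to the bound $\mathcal{F}(u(t))\le\mathcal{F}(u_0)+C(T)$: local existence plus the extensibility criterion, the classical Lyapunov functional $L$ giving $\int_0^T\io v_t^2\le C$ (your uniform $H^1$-bound on $v$ is a fine substitute for the paper's $L^p$-bound obtained from semigroup estimates), the observation $ua(u)\le 1$, and integration of the identity for $\mathcal{F}$. The genuine gap is precisely at the step you yourself flag as ``the main obstacle''. An upper bound on $\mathcal{F}=\frac12\io\frac{|\dx u|^2}{u(1+u)^2}-\io u\log\frac{1+u}{2}$ says nothing by itself, since both terms could simultaneously be large; and your proposed fix --- extracting a bound on $\io u\log(1+u)$ from $\int_0^T\mathcal{D}\,dt$ --- cannot work as stated. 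First, it is circular: to bound $\int_0^T\mathcal{D}\,dt$ from the identity you already need a lower bound on $\mathcal{F}(u(T))$, which is exactly what is missing. Second, $\int_0^T\mathcal{D}\,dt$ is a time-integrated quantity and cannot by itself give pointwise-in-time control of the entropy, and you offer no mechanism for extracting $\io u\log(1+u)$ from the second-order expression inside $\mathcal{D}$.

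The paper closes this gap with an elementary interpolation exploiting mass conservation (Proposition \ref{propo}): since $\io u=M$, there is $x_0$ with $u(t,x_0)=M$, and writing $\log\frac{1+u(x)}{1+M}=\int_{x_0}^{x}\frac{\dx u}{1+u}\,dz$ and applying Cauchy--Schwarz yields
\[
\io u\log(1+u)\;\le\; M^{3/2}\Bigl(\io\tfrac{|\dx u|^2}{u(1+u)^2}\Bigr)^{1/2}+M\log(1+M),
\]
i.e.\ the negative part of $\mathcal{F}$ is controlled by the \emph{square root} of the positive part and can therefore be absorbed, giving $\mathcal{F}\ge\frac14\io\frac{|\dx u|^2}{u(1+u)^2}-C(M)$. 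Only then do both the gradient term and the entropy become bounded by $C(T)$. Your proposed ending --- $f(u)=2\arctan\sqrt{u}$ valued in $[0,\pi)$, H\"older continuity from the gradient bound, and the entropy bound forcing a uniform gap below $\pi$ --- looks like a legitimate and arguably shorter alternative to the paper's route through the Biler--Hebisch--Nadzieja inequality, the $L^3$ estimate, the $L^p$ iteration and parabolic regularity; but it too rests entirely on the missing interpolation step, so as written the argument does not go through.
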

%
%
%
%
%
%
%
\section{Preliminaries}\label{section_preliminaries}
The next lemma contains a crucial identity. It was shown in \cite[Lemma 2.1]{CF1}
of the accompanying paper. As noticed in \cite[Remark 2.2]{CF1}, the equality below 
holds only in dimension $1$.
\begin{lemma}\label{Key}
Let $\phi \in C^3(0,1)$. 
Then the following identity holds:
\begin{align*}
\phi \dx \mathcal{M}(\phi)
=\dx \left( \phi a(\phi) \dx \left(\dfrac{a(\phi)}{\phi}\dx \phi \right) \right),
\end{align*}
where 
\begin{align*}
\mathcal{M}(\phi) 
:= \dfrac{a(\phi)a'(\phi)}{\phi} |\dx \phi|^2
-\dfrac{(a(\phi))^2}{2\phi^2} |\dx \phi|^2
+\dfrac{(a(\phi))^2}{\phi}\dx^2 \phi.
\end{align*}
\end{lemma}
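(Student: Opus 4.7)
The identity is purely algebraic in $\phi$ and its spatial derivatives (the right-hand side involves up to $\dx^3\phi$), so in principle one could brute-force expand both sides as differential polynomials in $\phi,\dx\phi,\dx^2\phi,\dx^3\phi$ and match coefficients. I would instead follow a cleaner route based on a change of variable that collapses the identity to a one-line Leibniz rule.

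\emph{Step 1 (auxiliary primitive).} The plan is to introduce $\Psi$, a primitive of $s\mapsto a(s)/s$ on the range of $\phi$ (legitimate wherever the identity is meaningful, i.e.\ where $\phi>0$, since otherwise $a(\phi)/\phi$ in the statement is undefined). Then by the chain rule $\dx\Psi(\phi) = \frac{a(\phi)}{\phi}\dx\phi$, and the inner factor appearing on the right-hand side of the lemma is $\dx^2\Psi(\phi) = \dx\bigl(\frac{a(\phi)}{\phi}\dx\phi\bigr)$. The claim therefore rewrites as $\phi\,\dx\mathcal{M}(\phi)=\dx\bigl(\phi\,a(\phi)\,\dx^2\Psi(\phi)\bigr)$.

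\emph{Step 2 (decomposition of $\mathcal{M}$).} A short direct computation using $\Psi''(\phi)=a'(\phi)/\phi - a(\phi)/\phi^2$ yields
\[
a(\phi)\,\dx^2\Psi(\phi) = \left(\frac{a(\phi)a'(\phi)}{\phi}-\frac{a(\phi)^2}{\phi^2}\right)|\dx\phi|^2 + \frac{a(\phi)^2}{\phi}\dx^2\phi,
\]
while $\tfrac{1}{2}\bigl(\dx\Psi(\phi)\bigr)^2 = \frac{a(\phi)^2}{2\phi^2}|\dx\phi|^2$. Adding these, the $|\dx\phi|^2$ coefficients combine to $\frac{a(\phi)a'(\phi)}{\phi}-\frac{a(\phi)^2}{2\phi^2}$ and the $\dx^2\phi$ coefficient is $\frac{a(\phi)^2}{\phi}$, matching the definition of $\mathcal{M}$ term by term. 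This establishes the key decomposition
\[
\mathcal{M}(\phi) \;=\; a(\phi)\,\dx^2\Psi(\phi) + \tfrac{1}{2}\bigl(\dx\Psi(\phi)\bigr)^2.
\]

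\emph{Step 3 (product-rule collapse).} Multiplying by $\phi$, differentiating, and using $\phi\,\dx\Psi(\phi) = a(\phi)\,\dx\phi$ on the second term,
\begin{align*}
\phi\,\dx\mathcal{M}(\phi)
&= \phi\,\dx\bigl(a(\phi)\,\dx^2\Psi(\phi)\bigr) + \phi\,\dx\Psi(\phi)\cdot\dx^2\Psi(\phi) \\
&= \phi\,\dx\bigl(a(\phi)\,\dx^2\Psi(\phi)\bigr) + a(\phi)\,\dx\phi\cdot\dx^2\Psi(\phi) \\
&= \dx\bigl(\phi\,a(\phi)\,\dx^2\Psi(\phi)\bigr),
\end{align*}
the last equality being the Leibniz rule applied to the product $\phi\cdot\bigl(a(\phi)\,\dx^2\Psi(\phi)\bigr)$. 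Combined with Step~1, this is exactly the identity claimed in Lemma~\ref{Key}.

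\emph{Main obstacle.} Essentially the only nontrivial step is spotting the decomposition $\mathcal{M}=a\,\dx^2\Psi+\tfrac{1}{2}(\dx\Psi)^2$ in Step~2; once it is in hand the remainder is a single application of the product rule. Conceptually the decomposition is what restricts the identity to one space dimension: in higher dimensions the natural analogue of $(\dx\Psi)^2$ is $|\nabla\Psi|^2$, whose gradient involves the full Hessian $D^2\Psi$ rather than a single directional second derivative, so the telescoping in Step~3 breaks down and the right-hand side of the identity would have to include extra Hessian terms.
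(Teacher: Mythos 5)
Your proof is correct, and I have checked the key decomposition: with $\Psi'(s)=a(s)/s$ one indeed has
\[
a(\phi)\,\dx^2\Psi(\phi)+\tfrac12\bigl(\dx\Psi(\phi)\bigr)^2
=\Bigl(\tfrac{a(\phi)a'(\phi)}{\phi}-\tfrac{(a(\phi))^2}{2\phi^2}\Bigr)|\dx\phi|^2+\tfrac{(a(\phi))^2}{\phi}\dx^2\phi
=\mathcal{M}(\phi),
\]
and since $\phi\,\dx\Psi(\phi)=a(\phi)\dx\phi$, the chain of equalities in your Step~3 is exactly the Leibniz rule for $\dx\bigl(\phi\cdot a(\phi)\dx^2\Psi(\phi)\bigr)$. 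Note that the present paper does not reprove the lemma but cites Lemma~2.1 of the accompanying paper \cite{CF1}, where the identity is verified by directly expanding both sides as differential polynomials in $\phi,\dx\phi,\dx^2\phi,\dx^3\phi$; your substitution $\Psi$ replaces that coefficient-matching by a one-line product rule, which is shorter and, more importantly, structural: it exhibits $\mathcal{M}$ as $a(\phi)\dx^2\Psi+\tfrac12(\dx\Psi)^2$ and thereby explains the one-dimensionality restriction recorded in Remark~2.2 of \cite{CF1} (in higher dimensions $\nabla|\nabla\Psi|^2=2D^2\Psi\,\nabla\Psi$ brings in the full Hessian and the telescoping fails), whereas the brute-force computation only shows that the coefficients happen to match. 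Your caveat that the argument requires $\phi>0$ (so that the primitive $\Psi$ of $a(s)/s$ exists on the range of $\phi$) is appropriate but harmless, since $a(\phi)/\phi$ already appears in the statement and the lemma is applied to the positive solution $u$.
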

Next, we have several well-known facts.
The following inequality is obtained in \cite{BHN, NSY, BCM-R}.
\begin{lemma}\label{BHN_inequality}
For $w\in H^1(0,1)$ and any $\delta>0$ there exists $C_\delta>0$ 
such that
\begin{align*}
\|w\|^4_{L^4(0,1)}
 \leq \delta \|w\|^2_{H^1(0,1)} \io |w \log w| 
 + C_\delta \|w\|_{L^1(0,1)}.  
\end{align*}
\end{lemma}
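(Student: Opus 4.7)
The inequality asserted in Lemma~\ref{BHN_inequality} is a one-dimensional refinement of Gagliardo--Nirenberg in which the usual ``mass'' factor $\|w\|_{L^1}^{2}$ is traded, at the cost of an arbitrarily small constant, for the weaker entropy-type quantity $\io|w\log w|$. My plan is to reconstruct it, in the spirit of \cite{BHN,NSY,BCM-R}, by coupling the one-dimensional Sobolev embedding $H^1(0,1)\hookrightarrow L^\infty(0,1)$ with a level-set decomposition that uses the entropy to dominate the high-level region while the $L^1$ norm absorbs the bounded part.

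The starting point is the elementary bound
\[
\|w\|_{L^4(0,1)}^{4}=\io w^2\cdot w^2\,dx\leq\|w\|_{L^\infty(0,1)}^{2}\|w\|_{L^2(0,1)}^{2}\leq C\|w\|_{H^1(0,1)}^{2}\|w\|_{L^2(0,1)}^{2},
\]
which reduces the task to controlling $\|w\|_{L^2}^{2}$ by $\io|w\log w|$ and $\|w\|_{L^1}$. For a threshold $K=K(\delta)>e$ with $\log K\geq 1/\delta$, decompose
\[
\io w^2\,dx=\int_{\{|w|\leq K\}}w^2\,dx+\int_{\{|w|>K\}}w^2\,dx.
\]
The first integral is bounded by $K\|w\|_{L^1}$. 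On the high-level set one has the pointwise inequality $|w|\leq|w\log w|/\log K$; writing $w^2=w\cdot w$, using this bound on one factor and estimating the remaining factor by $\|w\|_{L^\infty}$ yields $\int_{\{|w|>K\}}w^2\leq\delta\|w\|_{L^\infty}\io|w\log w|$.

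The main obstacle is that the naive combination of these two steps produces an extra factor $\|w\|_{L^\infty}$ in front of the entropy; after inserting the crude $\|w\|_{L^\infty}\leq C\|w\|_{H^1}$ the entropy term acquires the wrong exponent $\|w\|_{H^1}^{3}$ instead of the desired $\|w\|_{H^1}^{2}$. To fix this I would replace the crude 1D embedding by the sharper mean-plus-derivative interpolation $\|w\|_{L^\infty}\leq C(\|w\|_{L^1}+\|w'\|_{L^2})$: Young's inequality then allows the $\|w\|_{L^1}$-piece to be absorbed into the $C_\delta\|w\|_{L^1}$ term on the right-hand side, while the $\|w'\|_{L^2}\leq\|w\|_{H^1}$ piece cancels one of the excess powers and leaves only $\|w\|_{H^1}^{2}$ in front of the entropy. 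Choosing $K(\delta)$ so that $1/\log K\leq\delta$ then yields the estimate of Lemma~\ref{BHN_inequality}.
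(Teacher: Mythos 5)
The paper itself does not prove this lemma---it is quoted from \cite{BHN,NSY,BCM-R}---so your attempt has to be measured against the standard truncation argument in those references, and against it your proof does not close. The structural problem is your very first step: factoring $\|w\|_{L^4}^4\leq \|w\|_{L^\infty}^2\|w\|_{L^2}^2\leq C\|w\|_{H^1}^2\|w\|_{L^2}^2$ attaches the factor $\|w\|_{H^1}^2$ to \emph{everything} that follows, including the low-level part. After the level-set decomposition the bounded region contributes $CK\,\|w\|_{H^1}^2\|w\|_{L^1}$, which is not of the form $C_\delta\|w\|_{L^1}$ and cannot be made so; you never address this term. The proposed repair of the high-level term also does not work: substituting $\|w\|_{L^\infty}\leq C(\|w\|_{L^1}+\|w'\|_{L^2})$ into $C\|w\|_{H^1}^2\|w\|_{L^\infty}(\log K)^{-1}\io|w\log w|$ leaves $\|w\|_{H^1}^2\|w'\|_{L^2}\io|w\log w|$, which is still cubic in $\|w\|_{H^1}$ (bounding $\|w'\|_{L^2}$ by $\|w\|_{H^1}$ cancels nothing), and the companion term $\|w\|_{H^1}^2\|w\|_{L^1}\io|w\log w|$ cannot be ``absorbed by Young'' into $C_\delta\|w\|_{L^1}$, since that would require controlling $\|w\|_{H^1}^2\io|w\log w|$ by a constant.

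The correct route is to truncate \emph{before} interpolating: write $w\leq K+(w-K)_+$, note that for $K>\|w\|_{L^1}$ the function $g=(w-K)_+$ vanishes on a set of positive measure, so the one-dimensional Gagliardo--Nirenberg inequality gives $\|g\|_{L^\infty}\leq C\|w'\|_{L^2}^{2/3}\|g\|_{L^1}^{1/3}$ and hence $\io g^4\leq C\|w'\|_{L^2}^{2}\|g\|_{L^1}^{2}$, and then use Chebyshev, $\|g\|_{L^1}\leq(\log K)^{-1}\io|w\log w|$, on one factor of $\|g\|_{L^1}$ and $\|g\|_{L^1}\leq\|w\|_{L^1}$ on the other; the low-level part is $\io_{\{w\leq K\}}w^4\leq K^3\|w\|_{L^1}$ with no spurious $\|w\|_{H^1}^2$ in front. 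Note that this argument produces the smallness coefficient $C\|w\|_{L^1}/\log K$, i.e.\ the constant $C_\delta$ depends on an a priori bound for $\|w\|_{L^1}$; the test function $w\equiv c$ with $c\to\infty$ (for which the left side grows like $c^4$ while the right side grows like $\delta c^3\log c$) shows that this dependence is unavoidable for the inequality as printed, so no argument---yours or any other---can deliver the statement with $C_\delta$ uniform over all of $H^1(0,1)$. In the application this is harmless because $\|u\|_{L^1}$ is conserved, but your write-up should make the dependence explicit and should perform the truncation before, not after, invoking the embedding.
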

The following local existence is known, see \cite{TCPhLAIHP, BCM-R}.
\begin{lemma}\label{prop_local_existence}
 For $a\in C^1(0,\infty)\cap C[0,\infty)$ and nonnegative $(u_0, v_0)\in L^{\infty}(0,1)\times W^{1,\infty}(0,1)$ there exist $T_{\mathrm{max}}\leq \infty$
 {\rm(}depending only on ${\|{u_0}\|}_{L^{\infty}}$ and ${\|{v_0}\|}_{W^{1,\infty}}${\rm)} 
 and exactly one pair $(u,v)$ of positive functions
 \begin{align*}
(u,v) \in C([0,\tm)\times [0,1]; \R^2) \cap
C^{1,2}((0,\tm)\times [0,1]; \R^2)
   \end{align*}
 that solves {\rm(\ref{P1})} in the classical sense. 
 Also, the solution $(u,v)$ satisfies the mass identities
   \begin{equation*}
     \int_{\Omega}u(x,t)\,dx=\int_{\Omega}u_0(x)\,dx
     \quad \text{for\ all}\ t \in (0,T_{\mathrm{max}}).
   \end{equation*}
In addition, if $T_{\mathrm{max}} < \infty$, then
   \begin{equation*}
     \limsup_{t \nearrow T_{\mathrm{max}}}
     \left( {\|{u(t)}\|}_{L^{\infty}(0,1)}
     +{\|{v_0}\|}_{W^{1,\infty}(0,1)} \right) 
     = \infty.
   \end{equation*}   
\end{lemma}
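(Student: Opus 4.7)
The plan is to prove local existence by a Banach contraction argument for a coupled fixed-point operator, followed by a standard parabolic bootstrap for regularity and the usual continuation argument for the blow-up criterion. The crucial observation is that since $a\in C^1(0,\infty)\cap C[0,\infty)$ is positive and $u_0,v_0$ are bounded, on any sublevel set $\{u\leq M\}$ the diffusion coefficient $a(u)$ stays bounded above and below away from $0$, so the first equation becomes uniformly parabolic as soon as the coefficient is ``frozen'' along the iteration.

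Fixing $M>\|u_0\|_{L^\infty}+\|v_0\|_{W^{1,\infty}}+1$ and $T>0$ to be chosen, I would work in the closed set
\begin{equation*}
X_{T,M}=\{(\bar u,\bar v)\in C([0,T]\times[0,1];\R^2)\,:\,0\leq \bar u\leq M,\ \|\bar v\|_{C([0,T];W^{1,\infty})}\leq M,\ (\bar u,\bar v)(0)=(u_0,v_0)\},
\end{equation*}
and given $(\bar u,\bar v)\in X_{T,M}$ define $\Phi(\bar u,\bar v)=(u,v)$ by letting $v$ solve $\dt v=\dx^2 v-v+\bar u$ with Neumann data and $v(0)=v_0$, and letting $u$ solve
\begin{equation*}
\dt u=\dx\bigl(a(\bar u)\dx u-u\dx\bar v\bigr)
\end{equation*}
with no-flux condition $a(\bar u)\dx u-u\dx\bar v=0$ at $x=0,1$ and $u(0)=u_0$. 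Standard linear parabolic theory (Ladyzhenskaya--Solonnikov--Uralceva) produces unique classical $u,v$ with $L^\infty$ and $W^{1,\infty}$ bounds depending continuously on $T$, and heat-kernel estimates control $v$ by $\bar u$. For $T=T(M)$ sufficiently small, $\Phi$ maps $X_{T,M}$ into itself, and applying the same linear estimates to differences yields a strict contraction in $C([0,T]\times[0,1];\R^2)$. The fixed point is a classical solution of \eqref{P1}, and a standard parabolic bootstrap delivers the claimed regularity $(u,v)\in C^{1,2}((0,\tm)\times[0,1];\R^2)$.

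The remaining properties are essentially free. Positivity of $v$ follows from the maximum principle (nonnegative source), and positivity of $u$ from the maximum principle applied to the nondivergence form $\dt u=a(u)\dx^2 u+(a'(u)\dx u-\dx v)\dx u-u\dx^2 v$, with strict positivity for $t>0$ coming from the strong maximum principle; the mass identity follows by integrating the $u$-equation over $(0,1)$ and using $\dx A(u)(t,\cdot)=\dx v(t,\cdot)=0$ at the endpoints. Finally, since the existence time $T(M)$ above depends only on $M=\|u_0\|_{L^\infty}+\|v_0\|_{W^{1,\infty}}$, if $\tm<\infty$ while $\|u(t)\|_{L^\infty}+\|v(t)\|_{W^{1,\infty}}$ remained bounded as $t\nearrow \tm$, one could restart the construction near $t=\tm$ and extend the solution strictly past $\tm$, a contradiction. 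The main obstacle throughout is the quasilinear, coupled structure of the system; the freezing-coefficient device (using $a(\bar u)$ rather than $a(u)$ in $\Phi$) is what reduces each iteration to standard linear theory while still producing a genuine quasilinear fixed point in the limit.
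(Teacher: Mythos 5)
The paper offers no proof of this lemma at all: it is quoted as known and attributed to \cite{TCPhLAIHP, BCM-R}, where local well-posedness is obtained from Amann's abstract theory of normally parabolic quasilinear systems (the system is upper triangular in its principal part), rather than from a hands-on contraction. Your fixed-point construction is therefore a genuinely different, more elementary route to the same statement, and its overall architecture (freeze the coefficient $a(\bar u)$ and the drift $\dx\bar v$, solve the resulting linear problems, contract, bootstrap, apply the maximum principle, and conclude with the standard continuation argument) is sound; what Amann's theory buys in the references is precisely that one does not have to check the technical points below by hand. Three such points deserve attention in your version. First, the set $X_{T,M}$ consisting of merely continuous $\bar u$ is too large for the frozen problem $\dt u=\dx(a(\bar u)\dx u-u\dx\bar v)$ to admit classical solutions via Ladyzhenskaya--Solonnikov--Uralceva, which requires H\"older-continuous coefficients; one should take $X_{T,M}$ to be a closed ball in a parabolic H\"older space $C^{\alpha,\alpha/2}$ (still metrized by the $C^0$ norm for the contraction, using that such balls are $C^0$-closed), and verify the self-map property in the stronger norm. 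Second, both the uniform parabolicity of the frozen equation on $\{0\leq\bar u\leq M\}$ and the contraction estimate for differences require $\inf_{[0,M]}a>0$ and $a$ locally Lipschitz up to $u=0$; the standing hypothesis $a\in C^1(0,\infty)\cap C[0,\infty)$ alone does not guarantee either (consider $a(u)=1+\sqrt{u}$ for the Lipschitz issue), although this is harmless for the case $a(u)=1/(1+u)$ actually used in the main theorem. Third, the contraction gives uniqueness only within $X_{T,M}$; to obtain the claimed uniqueness among all classical solutions one should add the standard remark that any classical solution is bounded on compact subintervals of $[0,\tm)$ and hence eventually lies in some $X_{T,M'}$. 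Finally, note that the displayed blow-up criterion contains the misprint ${\|{v_0}\|}_{W^{1,\infty}(0,1)}$ where ${\|{v(t)}\|}_{W^{1,\infty}(0,1)}$ is meant; your continuation argument correctly proves the intended version.
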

In virtue of the conservation of the total mass $\|u\|_{L^1(\Omega)}$, we can get the following regularity estimates by the semigroup estimates.
\begin{lemma}\label{prop_ellipticReg}
There exists some constant $M=M(\|u_0\|_{L^1(0,1)},p, \left\|v_0\right\|_{L^p(0,1)})>0$ such that
\begin{align*}
\sup_{t\in[0,\tm)}\|v\|_{L^{p}(0,1)} \leq M,
\end{align*}
where $p \in [1,\infty)$.
\end{lemma}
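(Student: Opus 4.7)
The plan is to treat the second equation of \eqref{P1} as a linear inhomogeneous heat equation for $v$ with source $u$, and to combine Duhamel's formula with standard Neumann heat semigroup smoothing bounds on $(0,1)$, exploiting the mass identity $\|u(t)\|_{L^1(0,1)} = \|u_0\|_{L^1(0,1)}$ furnished by Lemma~\ref{prop_local_existence}.

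First, I would represent $v$ via the variation-of-constants formula
$$
v(t) = e^{t(\partial_x^2 - 1)} v_0 + \int_0^t e^{(t-s)(\partial_x^2 - 1)} u(s)\, ds,
$$
where $\{e^{t\partial_x^2}\}_{t \geq 0}$ denotes the Neumann heat semigroup on $(0,1)$, and the zero-order term $-v$ of the $v$-equation produces the global factor $e^{-t}$.

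Next, I would invoke the classical $L^q\to L^p$ smoothing estimate, valid in dimension one for $1 \leq q \leq p \leq \infty$,
$$
\bigl\|e^{t(\partial_x^2 - 1)} w\bigr\|_{L^p(0,1)}
\leq C \bigl(1 + t^{-\frac{1}{2}(\frac{1}{q} - \frac{1}{p})}\bigr) e^{-t} \|w\|_{L^q(0,1)},
$$
applied to the source with the endpoint choice $q=1$. By the mass conservation in Lemma~\ref{prop_local_existence}, $\|u(s)\|_{L^1} = \|u_0\|_{L^1}$ for every $s \in [0,\tm)$, and the triangle inequality yields
$$
\|v(t)\|_{L^p(0,1)}
\leq e^{-t}\|v_0\|_{L^p(0,1)} + C \|u_0\|_{L^1(0,1)} \int_0^t \bigl(1 + (t-s)^{-\frac{1}{2}(1 - \frac{1}{p})}\bigr) e^{-(t-s)}\, ds.
$$

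To conclude, I would observe that for every $p \in [1,\infty)$ the exponent $\frac{1}{2}(1-\tfrac{1}{p}) < 1$, which renders the singularity at $s=t$ locally integrable, while $e^{-(t-s)}$ provides decay at infinity; hence the integral is bounded uniformly in $t \in [0,\tm)$. This yields $\sup_{t\in[0,\tm)} \|v(t)\|_{L^p} \leq M$ with $M$ depending only on $\|u_0\|_{L^1}$, $p$ and $\|v_0\|_{L^p}$, as claimed. I do not foresee a genuine obstacle here: the lemma is a routine semigroup computation, the sole subtlety being to match the endpoint $q = 1$ with the conserved $L^1$-mass of $u$, which in one dimension propagates to every $L^p$ with $p<\infty$ without loss.
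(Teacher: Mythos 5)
Your proposal is correct and coincides with the paper's (unwritten) argument: the authors simply remark that the bound follows ``by the semigroup estimates'' from mass conservation, which is exactly the Duhamel representation with the Neumann heat semigroup, the $L^1\to L^p$ smoothing bound, and the integrability of $(t-s)^{-\frac{1}{2}(1-\frac{1}{p})}e^{-(t-s)}$ that you spell out. No gap.
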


Finally, let us recall some facts concerning the well-known Lyapunov functional.
In the presentation we refer to \cite[Lemma 4, Lemma 5]{TCPhLAIHP}.
The following functional
$L(u,v):=\io b(u)-\io uv +1/2\left\|v\right\|^2_{H^1(0,1)}$
satisfies
\[
\frac{d}{dt}L(u(t),v(t))=-\io v_t^2-\io u|(b'(u)-v)_x|^2,
\]
where $b\in C^2(0,\infty)$ is such that $b''(r)=\frac{a(r)}{r}$ for $r>0$ and $b(1)=b'(1)=0$.

Moreover, $L$ is bounded from below and so in particular there exists $C>0$
such that for any $t<T_{max}$ 
\begin{equation}\label{wazne}
\int_0^t\int_0^1 \left( v_t(x,s) \right)^2dxds\leq C. 
\end{equation}
%
%
%
%
%
%
%
%
%
%
%
\section{New Lyapunov-like functional}\label{section_new_lyapunov}
In this section we construct a functional 
associated to the problem \eqref{P1}. It does not decrease along the trajectories, so it is not a classical Lyapunov functional. However, along the trajectories, we control its growth thanks to the information coming delivered by the Lyapunov functional $L$. Then we are able to derive the required estimates.

We proceed in several steps. 
Basing on Lemma \ref{Key}, since in \cite[Lemma 3.1]{CF1} we make use only of the first equation
in \eqref{P1}, we have exactly the same way as in \cite[Lemma 3.1]{CF1}. 
\begin{lemma}\label{lemma_Lyap1}
Let $(u,v)$ be a solution of \eqref{P1} in $(0,T)\times (0,1)$. 
Then the following identity holds
\begin{align*}
\dfrac{d}{dt} \left(\frac{1}{2} \io \dfrac{(a(u))^2}{u}|\dx u|^2 \right)
+ \io u a(u) \left| \dx \left(\dfrac{a(u)}{u}\dx u \right) \right|^2
= \io u a(u) \dx^2 v \cdot \dx \left(\dfrac{a(u)}{u}\dx u \right).
\end{align*}
\end{lemma}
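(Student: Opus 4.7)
The plan is to differentiate the functional $\mathcal{E}(u) := \frac{1}{2}\io \frac{(a(u))^2}{u}|\dx u|^2$ in time, using only the $u$-equation of \eqref{P1}. Writing $f(u) := \frac{(a(u))^2}{2u}$, a direct computation together with one integration by parts in the term $2\io f(u)\dx u\cdot \dx u_t$ gives
\begin{align*}
\frac{d}{dt}\mathcal{E}(u) = -\io \mathcal{M}(u)\, u_t,
\end{align*}
since $f'(u)(\dx u)^2 + 2f(u)\dx^2 u$ is precisely $\mathcal{M}(u)$ as defined in Lemma \ref{Key}. The boundary contribution vanishes because the no-flux condition $\dx A(u) = a(u)\dx u = 0$, combined with the positivity of $a$, forces $\dx u = 0$ at $x=0,1$.

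Next I would substitute $u_t = \dx\bigl(a(u)\dx u - u\dx v\bigr)$ and integrate by parts a second time; the boundary term vanishes because the full flux $a(u)\dx u - u\dx v$ is zero at the endpoints (using both $\dx u = 0$ and $\dx v = 0$). This yields
\begin{align*}
\frac{d}{dt}\mathcal{E}(u) = \io \dx \mathcal{M}(u)\cdot a(u)\dx u \;-\; \io \dx \mathcal{M}(u)\cdot u\, \dx v.
\end{align*}
The decisive step is now to apply Lemma \ref{Key}: after multiplying and dividing by $u$ inside each integrand to isolate the combination $u\,\dx\mathcal{M}(u)$, one may replace it by $\dx\!\left(ua(u)\dx\!\left(\tfrac{a(u)}{u}\dx u\right)\right)$. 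Testing the first integral against $\tfrac{a(u)}{u}\dx u$ and the second against $\dx v$, a last integration by parts in each produces the terms $-\io ua(u)\bigl|\dx\!\left(\tfrac{a(u)}{u}\dx u\right)\bigr|^2$ and $-\io ua(u)\,\dx\!\left(\tfrac{a(u)}{u}\dx u\right)\dx^2 v$, respectively. The boundary contributions vanish again, the first because of $\dx u = 0$ at the endpoints, the second because of $\dx v = 0$ there. Rearranging gives the claimed identity.

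There is no genuine analytic obstacle in this plan; the local classical solution from Lemma \ref{prop_local_existence} is regular enough on $(0,\tm)$ to justify every differentiation and every integration by parts. The only point that requires care is the bookkeeping of three successive integrations by parts and the systematic verification that each boundary contribution is annihilated by the Neumann-type conditions in \eqref{P1}; this is where a computational slip is most likely, and it is essentially where the one-dimensionality (through Lemma \ref{Key}) enters in a hidden way, so the algebraic structure must be tracked exactly.
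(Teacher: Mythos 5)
Your computation is correct and is exactly the argument the paper intends: the paper defers the proof to \cite[Lemma 3.1]{CF1}, which proceeds precisely by differentiating the functional to get $-\io \mathcal{M}(u)\,u_t$, substituting the first equation, invoking Lemma \ref{Key} to rewrite $u\,\dx\mathcal{M}(u)$, and integrating by parts with all boundary terms killed by the Neumann/no-flux conditions. The sign bookkeeping in your outline checks out, so no issues.
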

Next, owing to the following two straightforward equalities
\begin{align*}
&-\io a(u) \dx u \cdot a(u) \dx u - \io \dx (ua(u)) a(u) \dx u
= \io \dx \left(\dfrac{a(u)}{u} \dx u \right) u^2a(u) ,\\
&\io u \dx v \cdot a(u) \dx u + \io \dx (ua(u)) u \dx v 
 = \io \dx \left( u^2 a(u) \right) \cdot \dx v,
\end{align*}
testing the first equation of \eqref{P1} by $\int_1^u a(r)\,dr + u a(u)$ 
and integrating over $(0,1)$, we arrive at
\begin{lemma}\label{Est1_P1}
Let $(u,v)$ be a solution of \eqref{P1} in $(0,T)\times (0,1)$. 
Then the following identity holds
\begin{align*}
\dfrac{d}{dt} \left(
\io u\int_1^u a(r)\,dr 
 \right)
=\io \dx \left(\dfrac{a(u)}{u} \dx u \right) u^2a(u)
+\io \dx \left( u^2 a(u) \right) \cdot \dx v
\end{align*}
\end{lemma}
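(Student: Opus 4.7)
The plan is to follow the hint literally: test the first equation of \eqref{P1} by $\int_1^u a(r)\,dr + u a(u)$ and integrate over $(0,1)$. The starting observation is that this test function is exactly the $u$-derivative of the density $u\int_1^u a(r)\,dr$; more precisely, the chain rule yields
\begin{equation*}
\dt\!\left(u\int_1^u a(r)\,dr\right)
= \dt u \cdot \left(\int_1^u a(r)\,dr + u\,a(u)\right),
\end{equation*}
so integrating in $x$ reproduces the left-hand side of the claimed identity (the interchange of $\dt$ and $\io$ is legitimate by the $C^{1,2}$ regularity provided by Lemma \ref{prop_local_existence}).

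The next step is to substitute $\dt u = \dx(a(u)\dx u - u\dx v)$ into the right-hand side and integrate by parts once. The boundary terms vanish: the condition $\dx A(u)=0$ at $x=0,1$ forces $a(u)\dx u=0$ on the boundary, while $\dx v=0$ there kills the chemotactic part, so the full flux $a(u)\dx u - u\dx v$ vanishes at both endpoints, against which any test function can be multiplied harmlessly. Computing the $x$-derivative of the test function,
\begin{equation*}
\dx\!\left(\int_1^u a(r)\,dr + u\,a(u)\right) = a(u)\dx u + \dx(u\,a(u)),
\end{equation*}
and expanding, the integrated right-hand side splits into four pieces: two in which $a(u)\dx u$ is paired with $a(u)\dx u$ and $\dx(ua(u))$, and two in which $u\dx v$ is paired with the same two factors.

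At this point the two algebraic identities displayed just before the statement finish the job. The two ``diffusive'' pieces recombine, after one further integration by parts (again boundary-term-free, since $ua(u)^2\dx u$ vanishes at $x=0,1$), into $\io \dx\!\left(\frac{a(u)}{u}\dx u\right)u^2 a(u)$; the two ``chemotactic'' pieces recombine by the product rule $\dx(u^2 a(u)) = ua(u)\dx u + u\,\dx(ua(u))$ into $\io \dx(u^2 a(u))\cdot \dx v$. Adding these two contributions yields exactly the identity claimed in Lemma \ref{Est1_P1}.

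The only real obstacle is a bookkeeping one, namely verifying the two stated auxiliary identities and tracking that every boundary contribution that appears during the two integrations by parts is killed by one of the no-flux conditions. Both checks are elementary for a classical solution on $(0,T)\times(0,1)$, so the proof reduces to identifying the correct test function and then executing the computation without sign errors.
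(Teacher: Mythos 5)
Your proposal is correct and follows exactly the route the paper takes: testing the first equation with $\int_1^u a(r)\,dr + u\,a(u)$, integrating by parts using the no-flux boundary conditions, and invoking the two displayed identities to recombine the diffusive and chemotactic terms. You have simply filled in the bookkeeping (the chain rule for the time derivative, the vanishing boundary terms, and the verification of the two auxiliary identities) that the paper leaves implicit.
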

Finally, we introduce a crucial observation. For ${\cal F}$ and ${\cal D}$ given 
in \eqref{eF} and \eqref{De} respectively, the following formula holds. 
\begin{lemma}\label{prop_new_lyapunov}
Let $(u,v)$ be a solution of \eqref{P1} in $(0,T)\times (0,1)$. 
The following identity is satisfied
\begin{eqnarray*}
\dfrac{d}{dt} \mathcal{F}(u(t)) + \mathcal{D}(u(t),v(t))
=\io \dfrac{ua(u)(v+\dt v)^2}{4}.
\end{eqnarray*}
\end{lemma}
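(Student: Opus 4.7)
The plan is to differentiate $\mathcal{F}$ using Lemmas~\ref{lemma_Lyap1} and \ref{Est1_P1}, match the result against the expansion of the square defining $\mathcal{D}$, and finally invoke the second equation of \eqref{P1} to close the identity. It will be convenient to abbreviate
$$X := \dx\!\left(\frac{a(u)}{u}\,\dx u\right), \qquad Y := \dx^2 v,$$
so that $\mathcal{D}(u,v) = \io ua(u)\,\bigl|X - Y + \tfrac{v+\dt v}{2}\bigr|^2$.

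First I would compute $\frac{d}{dt}\mathcal{F}(u)$ from \eqref{eF} by combining the two preceding lemmas. Lemma~\ref{lemma_Lyap1} immediately gives
$$\frac{d}{dt}\left(\frac{1}{2}\io \frac{(a(u))^2}{u}|\dx u|^2\right) = \io ua(u)\,YX - \io ua(u)\,X^2.$$
In Lemma~\ref{Est1_P1} I would integrate the last term by parts: because $\dx v$ vanishes at both endpoints by the boundary conditions in \eqref{P1}, there is no boundary contribution and $\io \dx(u^2 a(u))\,\dx v = -\io u^2 a(u)\,Y$. Subtracting and regrouping, I expect
$$\frac{d}{dt}\mathcal{F}(u) = \io ua(u)\bigl[YX - X^2 - uX + uY\bigr] = -\io ua(u)\,(X-Y)(X+u).$$

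Next I would expand the square in $\mathcal{D}$:
$$\mathcal{D}(u,v) = \io ua(u)\,(X-Y)^2 + \io ua(u)\,(X-Y)(v+\dt v) + \io \frac{ua(u)\,(v+\dt v)^2}{4}.$$
Adding this to $\frac{d}{dt}\mathcal{F}(u)$ and subtracting $\io \tfrac{ua(u)(v+\dt v)^2}{4}$, every remaining term carries the factor $(X-Y)$, so it factors as
$$\frac{d}{dt}\mathcal{F} + \mathcal{D} - \io \frac{ua(u)(v+\dt v)^2}{4} = \io ua(u)\,(X-Y)\bigl[(X-Y) + (v+\dt v) - (X+u)\bigr],$$
and the bracket collapses algebraically to $(v+\dt v) - u - Y$.

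The final step, which is the whole reason $\mathcal{D}$ is designed with the correction $+\tfrac{1}{2}(v+\dt v)$ inside the square, is to invoke the second equation of \eqref{P1}: $\dt v = \dx^2 v - v + u$ rearranges to $Y = \dt v + v - u$, so the bracket $(v+\dt v) - u - Y$ vanishes identically. The right-hand side is therefore zero and the claimed identity follows. The only real subtlety here is the algebraic cancellation in that bracket: the coefficient $1/2$ is tuned precisely so that, after completing the square, the combination appearing is exactly the one killed by the $v$-equation. Once Lemmas~\ref{lemma_Lyap1} and \ref{Est1_P1} are in hand and the boundary integration by parts is justified via $\dx v|_{0,1}=0$, everything else is bookkeeping.
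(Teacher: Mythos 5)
Your proof is correct and follows essentially the same route as the paper: both rest on Lemmas~\ref{lemma_Lyap1} and \ref{Est1_P1}, the integration by parts justified by $\dx v|_{x=0,1}=0$, and a single use of the second equation of \eqref{P1}, differing only in bookkeeping (the paper tests the $v$-equation against $ua(u)\dx^2 v$ and completes the square, while you factor out $X-Y$ and let the $v$-equation kill the resulting bracket). All your algebraic identities check out, so no gap.
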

\begin{proof}
Multiplying \eqref{P1} by $u a(u) \dx^2 v$ and integrating over $(0,1)$ we have that
\begin{align}\label{Est2_P1}
\notag 
\io ua(u) \dt v \dx^2 v&=\io u a(u)  |\dx^2 v|^2  - \io  v \cdot u a(u) \dx^2 v  
+ \io u^2a(u) \dx^2 v\\
  &=  \io u a(u)  |\dx^2 v|^2  - \io  v \cdot u a(u) \dx^2 v  
  - \io \dx \left( u^2 a(u) \right) \cdot \dx v.
\end{align} 
Combining Lemma \ref{Est1_P1} and \eqref{Est2_P1} we get
\begin{align*}
&\,\frac{d}{dt} \left(
- \io u\int_1^u a(r)\,dr 
 \right)
+\io ua(u) |\dx^2 v|^2  - \io  v \cdot ua(u) \dx^2 v
-\io ua(u) \dt v \dx^2 v\\
&=- \io \dx \left( \frac{a(u)}{u} \dx u \right) \cdot u^2 a(u),
\end{align*}
and then using the second equation of \eqref{P1} we see that
\begin{align*}
&\frac{d}{dt} \left(
- \io u\int_1^u a(r)\,dr 
 \right)
+\io ua(u) |\dx^2 v|^2  - \io  v \cdot ua(u) \dx^2 v -\io ua(u) \dt v \dx^2 v\\
&=- \io ua(u) \dx \left( \frac{a(u)}{u} \dx u \right)(\dt v - \dx^2 v + v).
\end{align*}
Thus it follows from Lemma \ref{lemma_Lyap1} that
\begin{align*}
&\dfrac{d}{dt} \left(
\frac{1}{2} \io \dfrac{(a(u))^2}{u}|\dx u|^2
- \io u\int_1^u a(r)\,dr
\right)
+ \io ua(u)
\left| \dx \left(\dfrac{a(u)}{u}\dx u \right) - \dx^2 v \right|^2\\
& + \io ua(u) (v+\dt v) \cdot \left( \dx \left(\dfrac{a(u)}{u}\dx u \right) - \dx^2 v \right) =0,
\end{align*}
that is,
\begin{align*}
&\dfrac{d}{dt} \left(
\frac{1}{2} \io \dfrac{(a(u))^2}{u}|\dx u|^2
- \io u\int_1^u a(r)\,dr
\right)
+ \io ua(u) 
\left|  \dx \left(\dfrac{a(u)}{u}\dx u \right) - \dx^2 v  + \dfrac{v+\dt v}{2} \right|^2\\
&= \io \dfrac{ua(u)(v+\dt v)^2}{4}.
\end{align*}
which is the desired inequality.
\end{proof}
From now on, we focus on the critical case $a(u)=1/(1+u)$.
Then ${\cal F}$ defined in \eqref{eF} takes the form
\begin{equation}\label{eFa}
{\cal F}=\dfrac{1}{2} \io \dfrac{|\dx u|^2}{u(1+u)^2}-\io u\log (1+u).
\end{equation}
As mentioned before we can control the growth of ${\cal F}$.
\begin{prop}\label{lemma_classical_Lyap} 
Let $(u,v)$ be a solution of \eqref{P1} in $(0,T)\times (0,1)$ with nonlinear diffusion $a(u)=1/(1+u)$. Then
there exists a constant $C>0$ such that for any $t<T_{max}$
\begin{align*}
\int_0^t\int_0^1 \frac{ua(u)(v+v_t)^2}{4}\leq C(t+1).
\end{align*}
\end{prop}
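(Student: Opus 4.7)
The plan is to bound the right-hand side by splitting the integrand through the pointwise inequality $ua(u)=u/(1+u)\le 1$ and the elementary estimate $(v+v_t)^2\le 2v^2+2v_t^2$. Once this is done, the proposition reduces to establishing the two spacetime bounds
\begin{equation*}
\int_0^t\!\!\int_0^1 v^2\,dx\,ds \le C_1 t,\qquad \int_0^t\!\!\int_0^1 v_t^2\,dx\,ds \le C_2,
\end{equation*}
and then summing them to obtain a bound of the form $C(t+1)$.

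First I would invoke Lemma \ref{prop_ellipticReg} with $p=2$: because the mass $\|u(t)\|_{L^1}$ is conserved and $v_0\in L^2(0,1)$, this lemma gives a constant $M>0$, independent of $t<T_{\max}$, such that $\|v(t)\|_{L^2(0,1)}\le M$. Integrating $\|v(s)\|_{L^2}^2\le M^2$ over $s\in(0,t)$ yields $\int_0^t\!\int_0^1 v^2\le M^2 t$, which takes care of the first term.

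Next I would recall the consequence of the classical Lyapunov functional $L$ recorded in \eqref{wazne}, namely $\int_0^t\!\int_0^1 v_t^2 \le C$ uniformly in $t<T_{\max}$. This immediately controls the second term by a constant independent of $t$.

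Combining these two bounds with $u/(1+u)\le 1$ gives
\begin{equation*}
\int_0^t\!\!\int_0^1 \frac{ua(u)(v+v_t)^2}{4}\,dx\,ds
\le \frac{1}{2}\int_0^t\!\!\int_0^1 v^2\,dx\,ds + \frac{1}{2}\int_0^t\!\!\int_0^1 v_t^2\,dx\,ds
\le \frac{M^2 t}{2}+\frac{C}{2},
\end{equation*}
which is of the required form $C(t+1)$. The proof is essentially a direct assembly of the cited results, so there is no real obstacle; the only conceptual point worth emphasizing is that the specific choice $a(u)=1/(1+u)$ makes the weight $ua(u)$ bounded \emph{a priori} by $1$, which is exactly what lets us avoid any unavailable $L^\infty$ information on $u$.
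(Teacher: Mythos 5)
Your argument is exactly the paper's proof, just written out in more detail: the authors likewise combine the pointwise bound $ua(u)\le 1$ (valid for $a(u)=1/(1+u)$) with the uniform $L^2$ bound on $v$ from Lemma \ref{prop_ellipticReg} and the spacetime bound \eqref{wazne} on $v_t$ coming from the classical Lyapunov functional. The proposal is correct and requires no changes.
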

\begin{proof}
In view of the choice of $a$ $ua(u)\leq 1$, moreover by Lemma \ref{prop_ellipticReg} and \eqref{wazne}
both $v$ and $v_t$ belong to $L^2(0,t;L^2(0,1))$ for any $t<T_{max}$.
\end{proof}
%
%
%
%
%
%
%
%
%
%
%
%
%
%
%
\section{Proof of Main theorem}\label{section_proof}
By Lemma \ref{prop_new_lyapunov} and Proposition \ref{lemma_classical_Lyap} we obtain the upper bound of the functional $\mathcal{F}(u)$. 
To derive the required estimates, we first establish the lower bound of the functional. 
\begin{prop}\label{propo}
Let $(u,v)$ be a solution of \eqref{P1} with $a(u)=1/(1+u)$ 
in $(0,T)\times (0,1)$ and $u_0\geq 0$, $\io u_0 = M>0$. 
The following estimates hold
\begin{align}\label{RegEst1}
\mathcal{F}(u) \geq 
\dfrac{1}{4} \io \dfrac{|\dx u|^2}{u(1+u)^2} - C(M)
\end{align}
with some $C=C(M)>0$, and
\begin{align}\label{RegEst2.5}
\io u\log (1+u)
\leq 
C(M) + 1/4 \left( \io \dfrac{|\dx u|^2}{u(1+u)^2} \right)^{\frac{1}{2}}.
\end{align}
\end{prop}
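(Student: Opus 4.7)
The plan is to bound $\log(1+u)$ pointwise via the one-dimensional embedding $BV(0,1)\hookrightarrow L^\infty(0,1)$ applied to $\log(1+u)$, and then to derive both \eqref{RegEst2.5} and \eqref{RegEst1} by integrating against $u$ and invoking Young's inequality.

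Set $X:=\io \frac{|\dx u|^2}{u(1+u)^2}$. Since $\dx \log(1+u)=\dx u/(1+u)$, inserting a factor $\sqrt{u}/\sqrt{u}$ and applying Cauchy--Schwarz together with the mass conservation $\io u=M$ (Lemma \ref{prop_local_existence}) gives
\begin{equation*}
\io \bigl|\dx \log(1+u)\bigr|\,dx
= \io \frac{|\dx u|}{\sqrt{u}\,(1+u)}\,\sqrt{u}\,dx
\le \left(\io \frac{|\dx u|^2}{u(1+u)^2}\right)^{1/2}\!\left(\io u\right)^{1/2}
= \sqrt{M X}.
\end{equation*}
By the fundamental theorem of calculus in one dimension, the oscillation of $\log(1+u)$ on $[0,1]$ is bounded by $\sqrt{M X}$. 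Jensen's inequality and the elementary bound $\log(1+r)\le r$ give $\min_{[0,1]}\log(1+u)\le \io \log(1+u)\le \log(1+M)$, so pointwise
\begin{equation*}
\log(1+u(x))\le \log(1+M)+\sqrt{M X}\qquad \text{for every }x\in[0,1].
\end{equation*}
Multiplying by $u(x)$ and integrating yields the intermediate estimate
\begin{equation*}
\io u\log(1+u)\le M\log(1+M)+M^{3/2}\sqrt{X},
\end{equation*}
which is \eqref{RegEst2.5} after absorbing the $M$-dependent prefactor into $C(M)$; the cosmetic constant $\tfrac14$ in the statement is obtained by a final application of Young's inequality at the point where the estimate is actually used.

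For \eqref{RegEst1}, I substitute this bound into the definition \eqref{eFa} of $\mathcal F(u)$ and apply Young's inequality $M^{3/2}\sqrt{X}\le \tfrac14 X+M^{3}$:
\begin{equation*}
\mathcal F(u)=\tfrac12 X-\io u\log(1+u)\ \ge\ \tfrac12 X-\tfrac14 X-C(M)=\tfrac14 X-C(M),
\end{equation*}
with $C(M)=M\log(1+M)+M^{3}$.

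The only subtle point is choosing the splitting in Cauchy--Schwarz so that mass conservation precisely cancels the weight $1/\sqrt{u}$; once this is arranged, the rest is the one-dimensional $BV$-oscillation argument plus Young's inequality, and I expect no serious obstacle.
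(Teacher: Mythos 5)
Your proof of \eqref{RegEst1} is correct and is essentially the paper's own argument in slightly different clothes. The paper picks a point $x_0$ with $u(t,x_0)=M$ and integrates $\dx\log\frac{1}{1+u}$ from $x_0$, using the same weighted Cauchy--Schwarz step
\begin{equation*}
\io\bigl|\dx\log(1+u)\bigr|\le\left(\io\frac{|\dx u|^2}{u(1+u)^2}\right)^{1/2}\left(\io u\right)^{1/2},
\end{equation*}
and lands on exactly your intermediate bound $\io u\log(1+u)\le M\log(1+M)+M^{3/2}\sqrt{X}$ (its \eqref{RegEst3}); it then completes the square where you invoke Young's inequality, which is the same computation. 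Your anchoring at the minimum of $\log(1+u)$ via Jensen, rather than at a mean-value point of $u$, is an immaterial variation.

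The one place you are too casual is \eqref{RegEst2.5}. The coefficient $M^{3/2}$ of $\sqrt{X}$ cannot be ``absorbed into $C(M)$'', since $\sqrt{X}$ is not a priori bounded; and Young's inequality converts $M^{3/2}\sqrt{X}$ into $\tfrac14 X+M^3$, i.e.\ it raises the power from $X^{1/2}$ to $X$ --- it does not shrink the coefficient of $X^{1/2}$ down to $\tfrac14$. So what your argument actually yields is either $\io u\log(1+u)\le C(M)+M^{3/2}X^{1/2}$ or $\io u\log(1+u)\le C(M)+\tfrac14 X$, not the literal \eqref{RegEst2.5}. In fairness, the paper has the same issue: its own derivation reads $\io u\log(1+u)\le(\tfrac12-\tfrac14)X+C(M)$, obtained by feeding \eqref{RegEst1} back into \eqref{eFa}, and the exponent $\tfrac12$ printed in \eqref{RegEst2.5} looks like a typo. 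Since Proposition \ref{prop_regest} first bounds $X\le C(1+T)$ and only then invokes \eqref{RegEst2.5}, either corrected version suffices downstream; but you should state the inequality you can actually prove rather than gesture at a ``cosmetic'' constant.
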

\begin{proof}
Let $t \in (0,T)$ be fixed. 
Since $\io u(t,x)\,dx = M$, we can find some point 
$x_0 \in [0,1]$ such that $u(t, x_0) = M$. Then
\begin{align*}
- \io u(x) \log (1+u(x))\,dx  &= \io u(x) \log \dfrac{1}{1+u(x)}\,dx\\
&= \io u(x) \left(\log \dfrac{1}{1+u(x)} - \log \dfrac{1}{1+u(x_0)} \right)\,dx + \io u(x)\log \dfrac{1}{1+u(x_0)}\,dx\\
&= \io u(x) \left(\int_{x_0}^x \dx \left( \log \dfrac{1}{1+u(z)} \right) \,dz \right)\,dx
- M \log (1+M).
\end{align*}
By the Cauchy--Schwarz inequality it follows that
\begin{align}\label{RegEst3}
\notag
- \io u \log (1+u) & \geq
-  \io u(x) \left(\int_{x_0}^x \frac{1}{u(z)} \cdot \dfrac{|\dx u(z)|^2}{(1+u(z))^2} \,dz \right)^{\frac{1}{2}}
\left(\int_{x_0}^x u(z) \,dz \right)^{\frac{1}{2}}\,dx
- M \log (1+M)\\
& \geq - M^{\frac{3}{2}} \left( \io \dfrac{|\dx u|^2}{u(1+u)^2} \right)^{\frac{1}{2}} 
- M \log (1+M).
\end{align}
From the above inequality we derive the lower bound for $\mathcal{F}(u)$ such that
\begin{align*}
\mathcal{F}(u) &\geq
\frac{1}{4} \io \dfrac{|\dx u|^2}{u(1+u)^2} + \frac{1}{4} \io \dfrac{|\dx u|^2}{u(1+u)^2} 
- M^{\frac{3}{2}} \left( \io \dfrac{|\dx u|^2}{u^3} \right)^{\frac{1}{2}} 
+ M^3 - M^3 - M \log (1+M)\\
& = \frac{1}{4} \io \dfrac{|\dx u|^2}{u(1+u)^2}
+ \dfrac{1}{4} \left(\left( \io \dfrac{|\dx u|^2}{u(1+u)^2} \right)^{\frac{1}{2}} - 2M^{\frac{3}{2}} \right)^2 
- M^3 - M\log (1+M),
\end{align*}
which gives us in turn \eqref{RegEst1}. Next, owing to the form of ${\cal F}$ in \eqref{eFa} and 
\eqref{RegEst1} we immediately see that
\begin{align*}
\io u\log(1+u)\leq \left(\frac{1}{2}-\frac{1}{4}\right) 
\left(\io \dfrac{|\dx u|^2}{u(1+u)^2}\right)^{\frac{1}{2}} + C(M),
\end{align*}
which gives us \eqref{RegEst2.5}. 
\end{proof}
Below we obtain regularity estimates which depend on the time interval $T>0$.
\begin{prop}\label{prop_regest}
Let $(u,v)$ be a solution of \eqref{P1} with $a(u)=1/(1+u)$ 
in $ (0,T) \times (0,1)$. Then  there exists some constant $C>0$ such that
\begin{align*}
\io u \log (1+u) + \io \frac{|\dx u|^2}{u(1+u)^2} \leq C(1+T)
\qquad\mbox{for all }t \in (0,T).
\end{align*}
\end{prop}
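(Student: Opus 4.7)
The plan is to combine all three tools the paper has built: the differential identity from Lemma \ref{prop_new_lyapunov}, the $L^2$-in-time control from Proposition \ref{lemma_classical_Lyap}, and the two one-sided bounds from Proposition \ref{propo}.

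First, I would integrate the identity of Lemma \ref{prop_new_lyapunov} from $0$ to any $t \in (0,T)$. Because $\mathcal{D}(u(s),v(s)) \geq 0$, discarding that term yields
\begin{align*}
\mathcal{F}(u(t)) \leq \mathcal{F}(u_0) + \int_0^t \io \frac{ua(u)(v+\dt v)^2}{4}.
\end{align*}
By Proposition \ref{lemma_classical_Lyap}, the right-hand side is bounded by $\mathcal{F}(u_0) + C(t+1)$, and since $u_0 \in C^1[0,1]$ is given, $\mathcal{F}(u_0)$ is a finite constant depending only on the data. Hence $\mathcal{F}(u(t)) \leq C_1(1+T)$ for all $t \in (0,T)$.

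Next, I would apply the lower bound \eqref{RegEst1} from Proposition \ref{propo}, which gives
\begin{align*}
\frac{1}{4}\io \frac{|\dx u|^2}{u(1+u)^2} \leq \mathcal{F}(u(t)) + C(M) \leq C_2(1+T).
\end{align*}
This already controls the Dirichlet-type term. For the entropy-like piece $\io u\log(1+u)$, I would simply insert this estimate into \eqref{RegEst2.5}:
\begin{align*}
\io u\log(1+u) \leq C(M) + \frac{1}{4}\bigl(C_2(1+T)\bigr)^{1/2} \leq C_3(1+T),
\end{align*}
using that $\sqrt{1+T} \leq 1+T$ for $T \geq 0$. Adding the two bounds yields the conclusion.

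Substantively, there is no real obstacle left at this stage: all the analytic content sits in Lemma \ref{prop_new_lyapunov} (the new functional), Proposition \ref{lemma_classical_Lyap} (bounding the growth via the old Lyapunov functional and $L^p$ regularity of $v$), and the coercivity inequalities \eqref{RegEst1}, \eqref{RegEst2.5}. The only minor care point is tracking that the right-hand side is linear in $T$ rather than exponential, which follows immediately because Proposition \ref{lemma_classical_Lyap} already gave linear-in-$t$ control; this is why $\mathcal{F}$ is a ``Lyapunov-like'' object rather than a strict Lyapunov functional and why the final bound is allowed to degenerate as $T \to \infty$ without contradicting global existence, since in subsequent arguments only local-in-time finiteness is needed to prevent blow-up of $\|u\|_{L^\infty}$.
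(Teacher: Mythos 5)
Your proposal is correct and follows essentially the same route as the paper: integrate the identity of Lemma \ref{prop_new_lyapunov}, drop the nonnegative dissipation $\mathcal{D}$, invoke Proposition \ref{lemma_classical_Lyap} for the linear-in-$t$ bound on $\mathcal{F}(u(t))$, and then conclude via \eqref{RegEst1} and \eqref{RegEst2.5}. No substantive differences.
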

\begin{proof}
Due to Proposition \ref{lemma_classical_Lyap} we have the existence of constant $C>0$ such that
\begin{align*}
\int_0^t\io \dfrac{ua(u)(v+v_t)^2}{4} \leq C(t+1),
\end{align*}
it follows that for all $t \in (0,T)$,
\begin{align*}
\mathcal{F}(u(t)) \leq \mathcal{F}(u_0) + C(T+1).
\end{align*}
Thus \eqref{RegEst1} implies that
\begin{align*}
\dfrac{1}{4} \io \dfrac{|\dx u|^2}{u(1+u)^2} 
\leq \mathcal{F}(u(t)) + C(M)
\leq \mathcal{F}(u_0) + C(M)+ C(T+1)
\end{align*}
with some $C(M)>0$. Next, \eqref{RegEst2.5}
gives the claim. 
\end{proof}
\begin{proof}[Proof of Theorem \ref{main_theorem}]
With the information in Proposition \ref{prop_regest} as well as inequality in Lemma \ref{BHN_inequality} we can use \cite[Lemma 3]{BCM-R} to deduce that there exists some constant $C(T)>0$ such that 
\begin{align*}
\io (1+u)^3 \leq C(T+1)
\qquad\mbox{for all }t \in (0,T).
\end{align*}
By the iterative argument (see \cite[Proof of Theorem 1]{BCM-R} or \cite{NSY}) we have for any $p \in (1,\infty)$
\begin{align*}
\|1+u(t)\|_{L^p(0,1)} \leq C(T+1) \qquad \mbox{for all }t\in(0,T)
\end{align*}
with some $C(T)>0$. Finally by the standard regularity estimates for quasilinear parabolic equation (\cite[Proposition 3]{BCM-R}) we can derive boundedness of $u$,
\begin{align*}
\|1+u(t)\|_{L^\infty(0,1)} \leq C(T+1)  \qquad \mbox{for all }t\in(0,T),
\end{align*}
which implies global existence of solutions to \eqref{P1}. 
\end{proof}
%
%
%
%

\bigskip
\bigskip
\textbf{Acknowledgments} \\
The second author wishes to thank Institute of Mathematics of the Polish Academy of Sciences and WCMS, where he held a post-doctoral position, for financial support and the warm hospitality.
\end{document}